\newtheorem{theorem}{Theorem}
\newtheorem{example}{Example}
\newtheorem{remark}{Remark}
\theoremstyle{definition}
\journal{}
\begin{document}

\begin{frontmatter}


\author[label1]{Jinwook Lee \corref{cor1}}
\ead{jl3539@drexel.edu}
\address[label1]{Decision Sciences and MIS, LeBow College of Business, Drexel University, Philadelphia, PA 19104, United States}
\cortext[cor1]{Corresponding author, Phone: 1-215-571-4672, Fax: 1-215-895-2907}

\title{On the probability of Boolean functions of events in the $n$-dimensional Euclidean space}


\begin{abstract}
It is shown in \cite{union2} and \cite{union1} that for the union of $N$ orthants in $R^n$ ($N \gg n$) there exists an efficient and systematic way to find the exact value, using a suitable partial order relation construction. In this paper our events are  hyperrectangles (or $n$-orthotopes), the Cartesian product of intervals -- another important sets (or events) in both theory and practice. We have discovered a new efficient algorithm for the union of such events. With other important Boolean functions we present optimization problem formulations for both hyperreectangles and orthants.    
 \end{abstract}

\begin{keyword}
Boolean functions of events; hyperrectangle; orthotope; binomial moment problem  \end{keyword}

\end{frontmatter}



\section{Introduction}

Let $A_1, \dots, A_N$ be events in an arbitrary probability space. In many applications we are interested to find probabilities of Boolean functions of them, among which prominent are the following:
\begin{center}
\begin{itemize}
\item [-] at least one of them occurs;
\item [-] at least $r$ of them occur;
\item [-] exactly $r$ of them occur.
\end{itemize}
\end{center}
In reliability theory, where consecutive events play important role, further Boolean functions of events can be used. Our results can be applied for probabilities of these other Boolean functions, as well, but here we restrict ourselves to the above-mentioned three.

Given such Boolean functions of events, related probabilities and conditional expectations are useful in decision making. For example, If there are $N$ divisions of a business organization, then a massive failure of just a few of the divisions will possibly collapse the entire organization. For this reason, a problem of ``at least $r$ ($1 \leq r \ll N$)" events comes into play for suitable decision making, using the probabilities of such events and their related conditional expectations (e.g., probability weighted sum of losses (or profits)), etc.

Our events in this paper are hyperrectangles (or $n$-orthotopes) in the $n$-space, designated by 
\begin{equation}\label{notation}
A_1 = A(z^{(1)}), \dots, A_N = A(z^{(N)}), 
\end{equation}
where $z^{(1)}, \dots, z^{(N)}$ are the pairs of lower and upper vertices of hyperrectangles $R^n$.
$z^{(i)} = [l^{(i)}, u^{(i)}]$ where $l^{(i)}, u^{(i)}$ for $i \in R^n, i = 1, \dots, N$.
Each of the $z^{(i)}$'s is a pair of vertices.

For simplicity, let us consider a cube in $R^3$: 
$$z^{(1)} = [(1, 2, 3), (4, 5, 6)].$$ 
Then our event is:
\begin{equation}\label{1.2}
\begin{array}{lcl}
A(z^{(1)}) &=& A([(1, 2, 3), (4, 5, 6)])\\
&=& \{(z_1, z_2, z_3) \ | \ (1, 2, 3) \leq (z_1, z_2, z_3) \leq (4, 5, 6) \},
\end{array}
\end{equation}
Note that there are eight vertices for the cube $z^{(1)}$, but we only need a pair. It is easy to see that no matter what dimension we have, say $n$ some large positive integer, we only need a pair (i.e., lower and upper corners) out of $2^n$ vertices. 

 (\ref{1.2}) can also be represented by the Cartesian product:
\begin{equation}\label{1.3}
A(z^{(1)}) = Z^{(1)} = Z_1^{(1)} \times Z_2^{(1)} \times Z_3^{(1)},
\end{equation}
where $Z_1 = \{z_1 \ | \ 1 \leq z_1 \leq 4\}$, $Z_2 = \{z_2 \ | \ 2 \leq z_2 \leq 5\}$ and $Z_3 = \{z_3 \ | \ 3 \leq z_3 \leq 6\}$.
Equations (\ref{1.2}) and (\ref{1.3}) are equivalent to
\begin{equation}\label{1.4}
\{(z_1, z_2, z_3) \ | \ z_1 \in [1, 4] \wedge z_2 \in [2, 5] \wedge z_3 \in [3, 6] \}.
\end{equation}

Let $f$ denote the p.d.f. of a random vector $Z \in R^3$. Then we can write
\begin{equation}\label{1.5}
P(A(z^{(1)})) = \displaystyle \int_1^4   \int_2^5   \int_3^6   f(z_1, z_2, z_3) dz_3 dz_2 dz_1.
\end{equation}

\section{Vertex comparison for empty intersections of hyperrectangles}
\begin{theorem}\label{thm1}
A pairwise intersection of $N$ hyperrectangles $A_1, \dots, A_N$ in $R^n$ is not empty if the condition (i) or (ii) is met.

\begin{itemize}
\item [(i)] For every $k = 1, \dots, n$, we have: 
$$\max\{l_k^{(i)}, l_k^{(j)}\}  \leq  \min\{u_k^{(i)}, u_k^{(j)}\},$$
where integers $i, j$ such that $1 \leq i < j \leq N.$
\item [(ii)] In case of a continuous distribution, for every $k = 1, \dots, n$, we have: 
$$\max\{l_k^{(i)}, l_k^{(j)}\}  <  \min\{u_k^{(i)}, u_k^{(j)}\},$$
where integers $i, j$ such that $1 \leq i < j \leq N.$
\end{itemize}
\end{theorem}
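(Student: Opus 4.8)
The plan is to reduce the $n$-dimensional emptiness question to $n$ independent one-dimensional interval-overlap questions, exploiting the product structure recalled in (\ref{1.3}). First I would write each hyperrectangle as a Cartesian product of its defining intervals, $A_i = \prod_{k=1}^n [l_k^{(i)}, u_k^{(i)}]$ and $A_j = \prod_{k=1}^n [l_k^{(j)}, u_k^{(j)}]$, and then invoke the elementary identity that the intersection of two Cartesian products equals the product of the coordinatewise intersections,
\[
A_i \cap A_j = \prod_{k=1}^n \bigl( [l_k^{(i)}, u_k^{(i)}] \cap [l_k^{(j)}, u_k^{(j)}] \bigr).
\]
This identity is the conceptual core of the argument; it holds because a point $z = (z_1,\dots,z_n)$ lies in both boxes exactly when each coordinate $z_k$ simultaneously satisfies both of its one-dimensional constraints.

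Next I would evaluate each factor using the standard formula for the intersection of two real intervals,
\[
[l_k^{(i)}, u_k^{(i)}] \cap [l_k^{(j)}, u_k^{(j)}] = \bigl[\, \max\{l_k^{(i)}, l_k^{(j)}\},\ \min\{u_k^{(i)}, u_k^{(j)}\} \,\bigr],
\]
adopting the convention that the right-hand side is empty whenever its left endpoint exceeds its right endpoint. Hence the $k$th factor is non-empty precisely when $\max\{l_k^{(i)}, l_k^{(j)}\} \leq \min\{u_k^{(i)}, u_k^{(j)}\}$, which is exactly the inequality appearing in condition (i).

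The final step assembles these observations. Because a Cartesian product is non-empty if and only if every one of its factors is non-empty, condition (i) holding for all $k = 1,\dots,n$ forces each factor above to be a non-empty interval; picking any representative $z_k^\ast \in [\max\{l_k^{(i)}, l_k^{(j)}\}, \min\{u_k^{(i)}, u_k^{(j)}\}]$ then yields a witness point $z^\ast = (z_1^\ast,\dots,z_n^\ast) \in A_i \cap A_j$, establishing non-emptiness. For condition (ii), the strict inequalities make every factor an interval of positive length, so $A_i \cap A_j$ is a full-dimensional box of positive Lebesgue measure; under a continuous distribution this upgrades non-emptiness to $P(A_i \cap A_j) > 0$, which is the operative notion when degenerate, measure-zero overlaps carry no probability. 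I do not expect a deep obstacle: the argument is essentially the product-intersection identity together with interval bookkeeping. The only subtle point is keeping the two regimes straight---the weak inequality of (i) certifies set-theoretic non-emptiness, whereas the strict inequality of (ii) is what is actually required to certify positive probability in the continuous setting.
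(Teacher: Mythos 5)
Your proof is correct and complete. The paper's own proof is a two-line contrapositive: ``Suppose not; then there is a coordinate $k$ with $\max\{l_k^{(i)}, l_k^{(j)}\} > \min\{u_k^{(i)}, u_k^{(j)}\}$, which can't make a nonempty intersection.'' Read literally, that argument establishes the converse direction (failure of the condition forces emptiness) and leaves the coordinatewise decomposition implicit, whereas you prove the stated implication directly: you make the key identity $A_i \cap A_j = \prod_{k=1}^n \bigl([l_k^{(i)}, u_k^{(i)}] \cap [l_k^{(j)}, u_k^{(j)}]\bigr)$ explicit, and you exhibit a witness point when every factor is nonempty. The underlying mathematical content is the same in both cases --- the intersection of two boxes is controlled coordinate by coordinate through the $\max$/$\min$ comparison --- but your constructive version is the more careful one, and it is the one that actually matches the direction asserted in the theorem. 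You also gain something the paper omits entirely: a treatment of condition (ii), where you correctly observe that strict inequalities yield a box of positive Lebesgue measure and hence positive probability under a continuous distribution, which is the relevant notion when a measure-zero overlap should not count. No gaps.
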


\begin{proof}
Suppose not. Then there exists $k$ such that $$\max\{l_k^{(i)}, l_k^{(j)}\}  >  \min\{u_k^{(i)}, u_k^{(j)}\},$$ which can't make a nonempty intersection. 
\end{proof}

\begin{remark}[Screening test]
Theorem \ref{thm1} can be used as a screening test. We can remove empty intersections from the inclusion-exclusion formula.
\end{remark}

\begin{example}\label{ex1}
For simplicity let us consider a continuous uniform random vector $(X_1, X_2)^T$ with support set $\{(x_1, x_2) \ | \ 0 \leq x_i \leq 10, i=1,2   \}$.
Suppose $A_1 = [(5,6), (9,9)]$, $A_2 = [(2, 4), (6, 7)]$, $A_3 =[(1, 3), (4, 8)]$, $A_4 = [(3, 2), (7,10)]$, $A_5=[(0,1), (10,5)]$. Note that they are depicted in Figure \ref{fig1}.
\end{example}

In Example \ref{ex1} there are 10 pairs $A_iA_j$, $i < j,$ for $i.j = 1, \dots, 5$ and their emptyness testing is as follows.

\begin{equation}\label{1.6}
\begin{array}{cc}
\text{\underline{Pairs}} & \text{\underline{Are conditions of Theorem \ref{thm1} met?}}\\
A_1A_2 = [(5, 6), (6, 7)]&\text{yes}\\
A_1A_3 = [(5, 6), (4, 8)]& \text{no good}\\
A_1A_4 = [(5, 6), (7, 9)]& \text{yes}\\
A_1A_5 = [(5, 6), (9, 5)]& \text{no good}\\
A_2A_3 = [(2, 4), (4, 7)]& \text{yes}\\
A_2A_4 = [(3, 4), (6, 7)]& \text{yes}\\
A_2A_5 = [(2, 4), (6, 5)]& \text{yes}\\
A_3A_4 = [(3, 3), (4, 8)]& \text{yes}\\
A_3A_5 = [(1, 3), (4, 5)]& \text{yes}\\
A_4A_5 = [(3, 2), (7, 5)]& \text{yes}\\
\end{array}
\end{equation}
We can draw Figure \ref{fig2} based on (\ref{1.6}) which shows $A_1A_3$ and $A_1A_5$ are empty.
In Figure \ref{fig2} there are eight 2-vertex clique, five 3-vertex clique and one 4-vertex clique.
In Example \ref{ex1} there are 10 triples $A_iA_jA_k$, $i < j < k,$ for $i.j, k = 1, \dots, 5$ where there might be some empty ones. 
Using the information of (\ref{1.6}) we can remove empty triples including the empty pairs of $A_1, A_3$ and $A_1, A_5.$
Therefore, we remove the following triples: $A_1A_2A_3, A_1A_3A_4, A_1A_3A_5$ and $A_1A_2A_5, A_1A_3A_5, A_1A_4A_5$ which are empty.

\begin{equation}\label{1.7}
\begin{array}{cc}
\text{\underline{Triples}} & \text{\underline{Are conditions of Theorem \ref{thm1} met?}}\\
A_1A_2A_4 = [(5, 6), (6, 7)]& \text{yes}\\
A_2A_3A_4 = [(3, 4), (4, 7)]& \text{yes}\\
A_2A_3A_5 = [(2, 4), (4, 5)]& \text{yes}\\
A_2A_4A_5 = [(3, 4), (6, 5)]& \text{yes}\\
A_3A_4A_5 = [(3, 3), (4, 5)]& \text{yes}\\
\end{array}
\end{equation}

Using the information of (\ref{1.7}) we find out that there is only one 4 tuple $A_2A_3A_4A_5$ and no 5 tuple.
\begin{equation}\label{1.8}
\begin{array}{cc}
\text{\underline{4-tuple}} & \text{\underline{Are conditions of Theorem \ref{thm1} met?}}\\
A_2A_3A_4A_5 = [(3, 4), (4, 5)] & \text{yes}
\end{array}
\end{equation}
With (\ref{1.6}), (\ref{1.7}), (\ref{1.7}), we can write up an inclusion exclusion formula:
$S_1 - S'_2 + S'_3 - S'_4,$ where we have only 19 terms compared to 31 terms in the original inclusion-exclusion formula.

\begin{figure}[ht!]
\begin{center}
\includegraphics[width=0.5\textwidth]{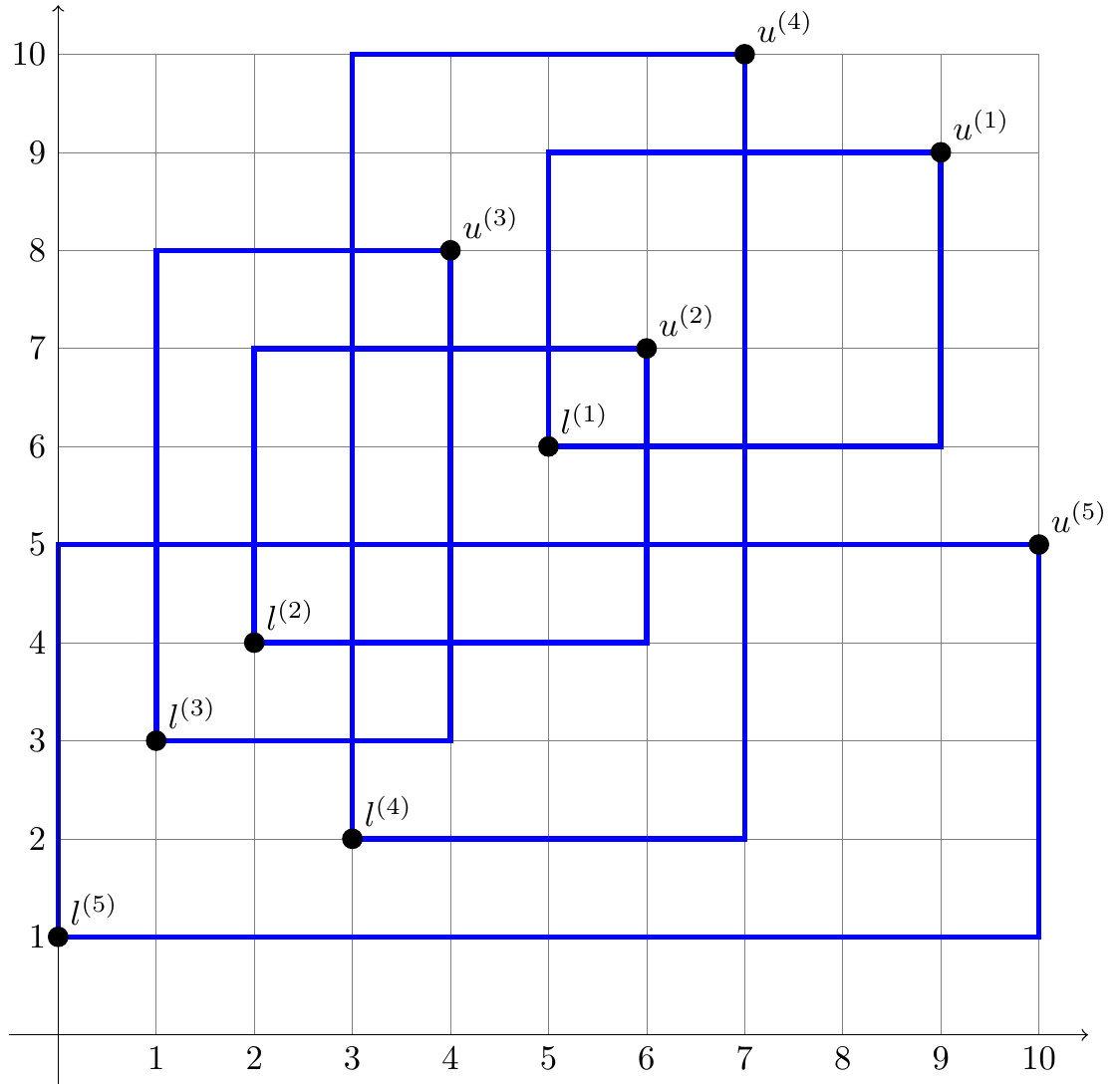}
\end{center}
  	  \caption{Description of Example 1.}\label{fig1}
         \end{figure}

\begin{figure}[ht!]
\begin{center}
\includegraphics[width=0.5\textwidth]{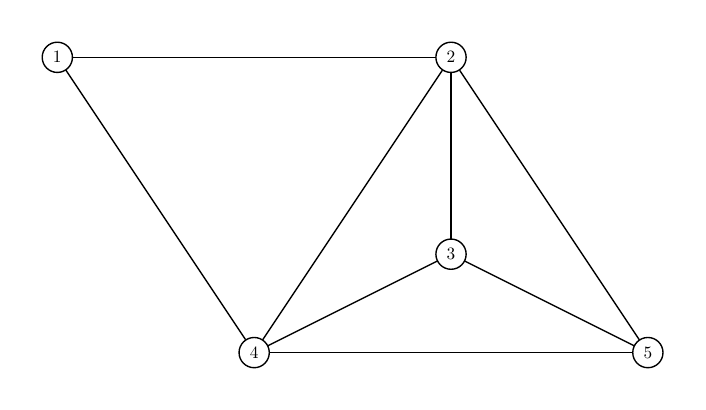}
\end{center}
  	  \caption{A graph with nonempty intersections. The clique number of the graph is 4.}\label{fig2}
         \end{figure}

\begin{example}\label{ex2}
For simplicity let us consider a continuous uniform random vector $(X_1, X_2, X_3)^T$ with support set $\{(x_1, x_2, x_3) \ | \ 0 \leq x_i \leq 5, i=1,2, 3 \}$.
Suppose $A_1 = [(0, 0, 0), (2, 2, 2)]$, $A_2 = [(3, 1, 3), (5, 3, 5)]$, $A_3 =[(1, 3, 3), (3, 5, 5)]$, $A_4 = [(4, 4, 4), (5, 5, 5)]$, $A_5=[(2, 2, 2), (3, 3, 4)]$, $A_6 = [(1, 4, 1), (2, 5, 2)]$, $A_7 = [(4, 1, 4), (5, 2, 5)]$.
\end{example}

In Example \ref{ex1} there are 21 pairs $A_iA_j$, $i < j,$ for $i.j = 1, \dots, 7$ and their emptyness testing is as follows.

\begin{equation}\label{1.9}
\begin{array}{cc}
\text{\underline{Pairs}} & \text{\underline{Are conditions of Theorem \ref{thm1} met?}}\\
A_1A_2 = [(3, 1, 3), (2, 2, 2)]&\text{no good}\\
A_1A_3 = [(1, 3, 3), (2, 2, 2)]& \text{no good}\\
A_1A_4 = [(4, 4, 4), (2, 2, 2)]& \text{no good}\\
A_1A_5 = [(2, 2, 2), (2, 2, 2)]& \text{no good}\\
A_1A_6 = [(1, 4, 1), (2, 2, 2)]&\text{no good}\\
A_1A_7 = [(4, 1, 4), (2, 2, 2)]&\text{no good}\\
A_2A_3 = [(3, 3, 3), (3, 3, 5)]& \text{no good}\\
A_2A_4 = [(4, 4, 4), (5, 3, 5)]& \text{no good}\\
A_2A_5 = [(3, 2, 3), (3, 3, 4)]& \text{no good}\\
A_2A_6 = [(3, 4, 3), (2, 3, 2)]&\text{no good}\\
A_2A_7 = [(4, 1, 4), (5, 2, 5)]&\text{yes}\\
A_3A_4 = [(4, 4, 4), (3, 5, 5)]&\text{no good}\\
A_3A_5 = [(2, 3, 3), (3, 3, 4)]& \text{no good}\\
A_3A_6 = [(1, 4, 3), (2, 5, 2)]&\text{no good}\\
A_3A_7 = [(4, 3, 4), (3, 2, 5)]&\text{no good}\\
A_4A_5 = [(4, 4, 4), (3, 3, 4)]&\text{no good}\\
A_4A_6 = [(4, 4, 4), (2, 5, 2)]&\text{no good}\\
A_4A_7 = [(4, 4, 4), (5, 2, 5)]&\text{no good}\\
A_5A_6 = [(2, 4, 2), (2, 3, 2)]&\text{no good}\\
A_5A_7 = [(4, 2, 4), (3, 2, 4)]&\text{no good}\\
A_6A_7 = [(4, 4, 4), (2, 2, 2)]&\text{no good}\\
\end{array}
\end{equation}

It is shown in (\ref{1.9}) that there is only one nonempty pair $A_2A_7$ which can be depicted as in Figure \ref{fig3}.
And the probability of union can be calculated only by $S_1 - S_2 = \sum_{i=1}^7 P(A_i) - P(A_2A_7) $ which has only 8 terms instead of 127 terms.

\begin{figure}[ht!]
\begin{center}
\includegraphics[width=0.5\textwidth]{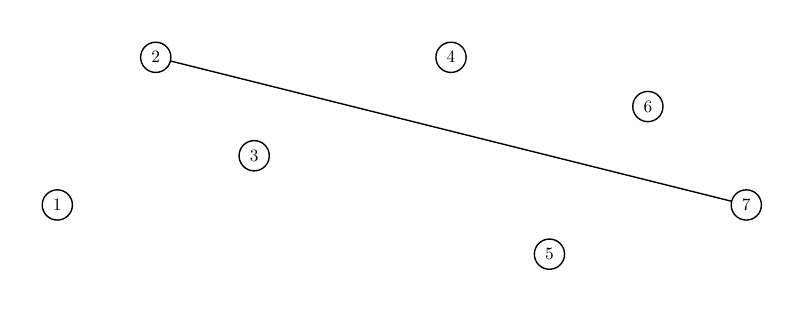}
\end{center}
  	  \caption{A graph with nonempty intersections. The clique number of the graph is 2.}\label{fig3}
         \end{figure}

\section{Probability bounding using linear program}

\cite{bounding1}, \cite{bounding2} and \cite{bounding3} observed that the sharp probability bounds, using $S_1, \dots, S_m$, are optimum values of LP's that he called binomial moment problems. In doing so, he opened a new research area: the discrete moment problems. The term binomial moment comes from the fact that if $\xi$ is the number of events in $A_1, \dots, A_N$, which occur, then
\begin{equation}\label{0.4}
S_k = E\left[  \binom{\xi}{k} \right], k =1, \dots, N.
\end{equation}
By convention we write $S_0 = 1$ with which (\ref{0.4}) holds also for $k=0$. Equation (\ref{0.4}) is a classical theorem and it is not known who proved it first.

Starting from (\ref{0.4}) and introducing the notations
\begin{equation}
p_i = P(\xi = i), i=0, 1, \dots, N,
\end{equation}

The binomial problems for:
\begin{itemize}
\item [-] at least one of them occurs;
\item [-] at least $r$ of them occur;
\item [-] exactly $r$ of them occur.
\end{itemize}
can be formulated as the following.

\begin{equation}\label{1.12}
\begin{array}{ll}
\min (\max) & \displaystyle \sum_{i=1}^N p_i \\
\text{subject to} & \displaystyle  \sum_{i=0}^N \binom{i}{k} p_i = S_k, k=0, 1, \dots, m\\ [2ex]
& p_i \geq 0, i=0, 1, \dots, N,\\
\end{array}
\end{equation}

\begin{equation}\label{1.13}
\begin{array}{ll}
\min (\max) & \displaystyle \sum_{i=r}^N p_i \\
\text{subject to} & \displaystyle  \sum_{i=0}^N \binom{i}{k} p_i = S_k, k=0, 1, \dots, m\\ [2ex]
& p_i \geq 0, i=0, 1, \dots, N,\\
\end{array}
\end{equation}

\begin{equation}\label{1.14}
\begin{array}{ll}
\min (\max) & \displaystyle  p_r \\
\text{subject to} & \displaystyle  \sum_{i=0}^N \binom{i}{k} p_i = S_k, k=0, 1, \dots, m\\ [2ex]
& p_i \geq 0, i=0, 1, \dots, N.
\end{array}
\end{equation}

Instead of problem (\ref{1.12}) \cite{bounding1} used a simpler problem, removing $p_0$ and the first constraint:
\begin{equation}\label{1.15}
\begin{array}{ll}
\min (\max) & \displaystyle \sum_{i=1}^N p_i \\
\text{subject to} & \displaystyle  \sum_{i=1}^N \binom{i}{k} p_i = S_k, k=1, \dots, m\\ [2ex]
& p_i \geq 0, i= 1, \dots, N,\\
\end{array}
\end{equation}

\cite{bounding1}  characterized the dual feasible bases and showed that there can be used closed form formulas for $m =2,3.$
For the case of a general $m$ he gave elegant dual algorithms to obtain bounds for the probabilities in question. 
The results have been used by \cite{bounding0} to obtain closed form formulas for the case $m=4$ and also for the case of a general $m$.

Another direction in probability bounding problems is to use the individual probabilities $p_{i_1 \dots i_k} = P(A_{i_1}\dots A_{i_k}), k \leq m,$ rather than the aggregated values $S_1, \dots, S_m.$ \cite{bool1} formulated probability bounding problems using individual probabilities that we call today LP's and further developed it in (\citeyear{bool2}). 
It was \cite{hailperin} who formulated the general LP that we call today Boolean probability bounding problem. In order to present it in a simple  form we introduce the notations: 
\begin{equation}\nonumber
\begin{array}{rcl}
p_I &     = &\displaystyle P \Big( \bigcap_{j \in I} A_j \Big), \ I,J \subset  \{1, \dots, N\},\\
a_{IJ}  & = & \left\{ \begin{array}{l} 1, \text{ if }  I \subset J \\
			                      0,  \text{ if } I \nsubseteq  J, \ I, J \subset \{1, \dots, N\} 
			                      \end{array}\right. ,\\ [2ex]
x_J &    =  &\displaystyle P\Big( \big( \bigcap_{j \in J} A_j \big) \cap \big(\bigcap_{j \notin J} \bar{A}_j \big) \Big),\\ [2ex]
\end{array}
\end{equation}
The probability $p_I$ means that all events $A_j, \ j \in K$ occur and the probability $x_J$ means that all events $A_j, \ j\in I$ occur but the other do not occur.
The Boolean lower and upper bounding problems for the union are the following:
\begin{equation}\nonumber
\begin{array}{l}
\min (\max) \displaystyle \sum_{\emptyset \neq J \subset \{1, \dots, N \}} x_J\\
\text{subject to}\\[1ex]
\displaystyle \sum_{J \subset \{1, \dots, N \} } a_{IJ}x_J = p_I, \ I \subset \{1, \dots, N \} , |I| \leq m\\[3ex]
x_J \geq 0, J  \subset \{1, \dots, N \}.
\end{array}
\end{equation}
A counterpart of it also exist, where we use $I, J \neq \emptyset$.
The first closed form bounding formula was given by \cite{hunter} and \cite{worsley}. In this connection we also mention \cite{92prekopa}, \cite{97caen},  \cite{kuai}, \cite{cherry}, \cite{polycom}. \cite{92prekopa} has shown that the Hunter-Worsley bounds in the objective function value corresponding to a dual feasible basis in the modified Boolean minimization problem.
\cite{cherry} have shown the same about their upper bound.

\section{The binomial moment problem formulations for the probabilities that at least $k$ and exactly $k$ events occur}
In case of orthants, the bounds of the probability that ``at least $k$ events occur" can be found by the following
\begin{equation}
\begin{array}{ll}
\displaystyle  \min (\max) &  \displaystyle   \sum_{i=k}^N p_i\\[2ex]
\displaystyle  & \displaystyle   \sum_{i=1}^N p_i = Q\\
& \displaystyle   \sum_{i=1}^N i p_i = S_1\\
& \displaystyle   \sum_{i=1}^N \binom{i}{2} p_i = S_2\\
& \displaystyle   \sum_{i=1}^N  \binom{i}{3} p_i = S_3\\
\displaystyle  & p_i \geq 0,  i=1, \dots, N,  
\end{array}
\end{equation}
where $Q$ denotes the probability of union that can be calculated by by Theorem 9 of \cite{union2}. The worst case time complexity of calculating $Q$ is  $O(n^2N^2 + n \cdot N \log N)$ for $R^n, n\geq 3$ ($O(n^2N^2)$ for sorted sets). Note that in practice we typically have $N \gg n$. If we assume that $N \geq n^2$, then it can be written $O(N^3)$ for sorted sets.
In this case, the above formulation would be more efficient and give better bounds than the original binomial moment problem formulation. 
In the bivariate case $Q$ can be calculated in a constant time for sorted sets since we have $Q = S_1-S_2'$, where $S'_2$ denotes the sum of incomparable pairwise intersections (see \cite{union1} for more details.).

Similarly, the bounds of the probability that ``exactly $k$ events occur" can be found by the following
\begin{equation}
\begin{array}{ll}
\min (\max) & p_k\\
\text{subject to} & \displaystyle   \sum_{i=1}^N p_i = Q\\
& \displaystyle   \sum_{i=1}^N i p_i = S_1\\
& \displaystyle   \sum_{i=1}^N \binom{i}{2} p_i = S_2\\
& \displaystyle   \sum_{i=1}^N  \binom{i}{3} p_i = S_3\\
& p_i \geq 0, i=1, \dots, N.  

\end{array}
\end{equation}

In case of hyperrectangles, we can just use (\ref{1.13}), (\ref{1.14}) and (\ref{1.15}) with a lot less terms.

\bibliographystyle{elsarticle-harv}
\bibliography{Nov_17_refs}







\end{document}